\documentclass[11pt]{article}

\evensidemargin0cm \oddsidemargin0cm \textwidth16cm
\textheight23cm \topmargin-2cm

\usepackage{amsmath}
\usepackage{amsthm}
\usepackage{amsfonts}
\usepackage{bbm}

\usepackage{xcolor}

\newcommand{\vf}{\varphi}

\newcommand{\mmp}{\mathbb{P}}

\newcommand{\tp}{\overset{{\rm P}}{\to}}

\newcommand{\me}{\mathbb{E}}

\newcommand{\mr}{\mathbb{R}}
\newcommand{\mn}{\mathbb{N}}

\newcommand{\lin}{\lim_{n\to\infty}}

\newcommand{\lix}{\underset{x\to\infty}{\lim}}

\newcommand{\be}{\begin{equation}}
\newcommand{\ee}{\end{equation}}

\DeclareMathOperator{\1}{\mathbbm{1}}

\newtheorem{thm}{Theorem}[section]
\newtheorem{lemma}[thm]{Lemma}

\newtheorem{assertion}[thm]{Proposition}
\theoremstyle{definition}

\theoremstyle{remark}
\newtheorem{rem}[thm]{Remark}

\begin{document}
\title{Functional limit theorems for the maxima of perturbed random walks and divergent perpetuities in the $M_1$-topology}

\author{Alexander Iksanov\footnote{Faculty of Computer Science and Cybernetics, Taras Shevchenko National University of Kyiv, Kyiv,
Ukraine and Institute of Mathematics, University of Wroc{\l}aw,
50-384 Wroc{\l}aw, Poland; e-mail: iksan@univ.kiev.ua}, Andrey
Pilipenko\footnote{Institute of Mathematics, National Academy of
Sciences of Ukraine, Kyiv, Ukraine; \newline e-mail:
pilipenko.ay@yandex.ua} \ and \ Igor Samoilenko\footnote{Faculty
of Computer Science and Cybernetics, Taras Shevchenko National
University of Kyiv, Kyiv, Ukraine;
\newline e-mail: isamoil@i.ua}}
\maketitle
\begin{abstract}
\noindent Let $(\xi_1,\eta_1)$, $(\xi_2,\eta_2),\ldots$ be a
sequence of i.i.d.\ two-dimensional random vectors. In the earlier article Iksanov and Pilipenko (2014) weak convergence in the $J_1$-topology on the Skorokhod space of $n^{-1/2}\underset{0\leq k\leq \cdot}{\max}\,(\xi_1+\ldots+\xi_k+\eta_{k+1})$ was proved under the assumption that contributions of $\underset{0\leq k\leq
n}{\max}\,(\xi_1+\ldots+\xi_k)$ and $\underset{1\leq k\leq
n}{\max}\,\eta_k$ to the limit are comparable and that $n^{-1/2}(\xi_1+\ldots+\xi_{[n\cdot]})$ is attracted to a Brownian motion. In the present paper, we continue this line of research and investigate a more complicated situation when $\xi_1+\ldots+\xi_{[n\cdot]}$, properly normalized without centering, is attracted to a centered stable L\'{e}vy process, a process with jumps. As a consequence, weak convergence normally holds in the $M_1$-topology. We also provide sufficient conditions for the $J_1$-convergence. For completeness, less interesting situations are discussed when one of the sequences $\underset{0\leq k\leq
n}{\max}\,(\xi_1+\ldots+\xi_k)$ and $\underset{1\leq k\leq
n}{\max}\,\eta_k$ dominates the other. An application of our main results to divergent perpetuities with positive entries is given.
\end{abstract}

\noindent Key words: functional limit theorem; $J_1$-topology;
$M_1$-topology; perpetuity; perturbed random walk

\section{Introduction and results}

Let $(\xi_k, \eta_k)_{k\in\mn}$ be a sequence of i.i.d.\
two-dimensional random vectors with generic copy $(\xi,\eta)$. No
condition is imposed on the dependence structure between $\xi$ and
$\eta$. Set $\mn_0:=\mn\cup\{0\}$. Further, let
$(S_n)_{n\in\mn_0}$ be the zero-delayed ordinary random walk with
increments $\xi_n$ for $n\in\mn$, i.e., $S_0 = 0$ and $S_n =
\xi_1+\ldots+\xi_n$, $n \in \mn$. Then define its perturbed
variant $(T_n)_{n\in\mn}$, that we call {\it perturbed random
walk}, by
\begin{equation}    \label{eq:PRW}
T_n := S_{n-1} + \eta_n,    \quad   n \in \mn.
\end{equation}
Recently it has become a rather popular object of research, see
the recent book \cite{Iksanov:2017} for a survey and
\cite{Alsmeyer+Iksanov+Meiners:2015, Araman+Glynn:2006,
Hao+Tang+Wei:2009, Iksanov+Pilipenko:2014, Meerschaert+Stoev:2009,
Palmowski+Zwart:2007, Palmowski+Zwart:2010,
Pancheva+Jordanova:2004, Pancheva+Mitov+Mitov:2009, Robert:2005,
Wang:2014}. It is worth noting that sometimes in the literature
the term `perturbed random walk' was used to denote random
sequences other than those defined in \eqref{eq:PRW}. See, for
instance, \cite{Chen+Sun:2014+, Davis:1996,
Iksanov+Pilipenko:2015, Lai+Siegmund:1977, Lai+Siegmund:1979, Pilipenko+Prihodko:2015,
Woodroofe:1982} and Section 6 in \cite{Gut:2009}.

Denote by $D:=D[0,\infty)$ the Skorokhod space of real-valued
right-continuous functions which are defined on $[0,\infty)$ and
have finite limits from the left at each positive point.
Throughout the paper we assume that $D$ is equipped with either
the $J_1$-topology or the $M_1$-topology. We refer to
\cite{Billingsley:1968, Jacod+Shiryaev:2003} and \cite{Whitt:2002}
for comprehensive accounts of the $J_1$- and the $M_1$-topologies,
respectively. We write $\mathcal{M}_p(A)$ for the set of Radon
point measures on a `nice' space $A$. The $\mathcal{M}_p(A)$ is
endowed with vague convergence. More information on these can be
found in \cite{Resnick:2007}. Throughout the paper $\overset{{\rm
J_1}}{\Rightarrow}$ and $\overset{{\rm M_1}}{\Rightarrow}$ will
mean weak convergence on the Skorokhod space $D$ when endowed with
the $J_1$-topology and the $M_1$-topology, respectively. The
notation $\Rightarrow$ without superscript is normally followed by
a specification of the topology and the space involved. Finally,
we write $\overset{{\rm v}}{\to}$ and $\overset{{\rm P}}{\to}$ to
denote vague convergence and convergence in probability,
respectively.

In the present paper we are interested in weak convergence on $D$
of $\max_{0\leq k\leq [n\cdot]}\,(S_k+\eta_{k+1})$, properly
normalized without centering, as $n\to\infty$. It should not come
as a surprise that the maxima exhibit three types of different
behaviors depending on the asymptotic interplay of
$A_n:=\underset{0\leq k\leq n}{\max}\,S_k$ and
$B_n:=\underset{1\leq k\leq n+1}{\max}\,\eta_k$, namely, on
whether (I) $A_n$ dominates $B_n$; (II) $A_n$ is dominated by
$B_n$; (III) $A_n$ and $B_n$ are comparable.

Relying essentially upon findings in \cite{Iksanov+Pilipenko:2014}
three functional limit theorems for the maxima of perturbed random
walks, properly rescaled without centering, were proved in
\cite{Iksanov:2017} under the assumption that $\me\xi^2<\infty$.
Throughout the remainder of the paragraph we assume that the most
interesting alternative (III) prevails. The situation treated in
\cite{Iksanov+Pilipenko:2014} was relatively simple because the
limit process for $S_{[n\cdot]}/n^{1/2}$ was a Brownian motion, a
process with continuous paths. As a consequence, the convergence
took place in the $J_1$-topology on $D$, and, more surprisingly,
the contributions of $(S_k)$ and $(\eta_j)$ turned out
asymptotically independent, despite the possible strong dependence
of $\xi$ and $\eta$. In the present paper we treat a more delicate
case where the distribution of $\xi$ belongs to the domain of
attraction of an $\alpha$-stable distribution, $\alpha\in (0,2)$,
so that the limit process for $S_{[n\cdot]}$, properly normalized,
is an $\alpha$-stable L\'{e}vy process. We shall show that the
presence of jumps in the latter process destroys dramatically an
idyllic picture pertaining to the Brownian motion scenario: the
convergence typically holds in the weaker $M_1$-topology on $D$,
and the aforementioned asymptotic independence only occurs in some
exceptional cases where $\xi$ and $\eta$ are themselves
asymptotically independent in an appropriate sense.

Throughout the paper we assume that, as $x\to\infty$,
\begin{equation}\label{cond10}
\mmp\{|\xi|>x\}\sim x^{-\alpha}\ell(x)
\end{equation}
and that \begin{equation}\label{cond100} \mmp\{\xi>x\}\sim
c_1\mmp\{|\xi|>x\},\quad \mmp\{-\xi>x\}\sim c_2\mmp\{|\xi|>x\}
\end{equation}
for some $\alpha\in (0,2)$, some $\ell$ slowly varying at
$\infty$, some nonnegative $c_1$ and $c_2$ summing up to one. The
assumptions mean that the distribution of $\xi$ belongs to the
domain of attraction of an $\alpha$-stable distribution. To ensure
weak convergence of $S_{[n\cdot]}$ without centering we assume
that $\me\xi=0$ when $\alpha\in (1,2)$ and that the distribution
of $\xi$ is symmetric when $\alpha=1$. Then the classical
Skorokhod theorem (Theorem 2.7 in \cite{Skorokhod:1957}) tells us
that
\begin{equation}\label{skor}
\frac{S_{[n\cdot]}}{a(n)}~\overset{{\rm J_1}}{\Rightarrow}~
\mathcal{S}_\alpha(\cdot),\quad n\to\infty,
\end{equation}
where $a(x)$ is a positive function satisfying $\lix
x\mmp\{|\xi|>a(x)\}=1$ and
$\mathcal{S}_\alpha:=(\mathcal{S}_\alpha(t))_{t\geq 0}$ is an
$\alpha$-stable L\'{e}vy process with the characteristic function
$$\me\exp({\rm i}z\mathcal{S}_\alpha(1))=\exp\big(|z|^\alpha (\Gamma(2-\alpha)/(\alpha-1))(\cos(\pi\alpha/2)-{\rm i}(c_1-c_2)\sin(\pi\alpha/2)
{\rm sign} z)\big),\quad z\in\mr$$ when $\alpha\in (0,2)$,
$\alpha\neq 1$, here, $\Gamma(\cdot)$ denotes the Euler gamma
function, and
$$\me\exp({\rm i}z\mathcal{S}_\alpha(1))=\exp(-2^{-1}\pi|z|),\quad
z\in\mr$$ when $\alpha=1$.

Put $E:=[-\infty, +\infty]\times [0,\infty]\backslash\{(0,0)\}$.
For a Radon measure $\rho$ on $E$, let $N^{(\rho)}:=\sum_k
\varepsilon_{(\theta_k,\,i_k,\,j_k)}$ be a Poisson random measure
on $[0,\infty)\times E$ with mean measure $\mathbb{LEB}\times
\rho$, where $\varepsilon_{(t,\,x,\, y)}$ is the probability
measure concentrated at $(t,x,y)\subset [0,\infty)\times E$,
$\mathbb{LEB}$ is the Lebesgue measure on $[0,\infty)$.

Theorem \ref{main} which is the main result of the present paper
treats the most complicated situation (III) when the contributions
of $\underset{0\leq k\leq n}{\max}\,S_k$ and $\underset{1\leq
k\leq n+1}{\max}\,\eta_k$ to the asymptotic behavior of
$\underset{0\leq k\leq n}{\max}\,(S_k+\eta_{k+1})$ are comparable.
At this point it is worth stressing that $\xi$ and $\eta$ are
assumed arbitrarily dependent which makes the analysis nontrivial.
We stipulate hereafter that the supremum over the empty set equals
zero.
\begin{thm}\label{main}
Suppose that conditions \eqref{cond10} and \eqref{cond100} hold,
that $\mmp\{\eta>x\}\sim c\mmp\{|\xi|>x\}$ as $x\to\infty$, for
some $c>0$, and that
\begin{equation}\label{princ}
x\mmp\bigg\{\frac{(\xi,\eta^+)}{a(x)}\in
\cdot\bigg\}~\overset{{\rm v}}{\to}~\nu,\quad x\to\infty
\end{equation}
on $\mathcal{M}_p(E)$. Then
\begin{equation}\label{limit_perturbed01}
\frac{\underset{0\leq k\leq [n
\cdot]}{\max}\,(S_k+\eta_{k+1})}{a(n)}\quad\overset{{\rm
M_1}}{\Rightarrow}\quad \sup_{0\leq
s\leq\cdot}\,\mathcal{S}^\ast_\alpha(s)\vee \sup_{\theta_k\leq
\cdot} \,(\mathcal{S}^\ast_\alpha(\theta_k-)+j_k),\quad
n\to\infty,
\end{equation}
where $(\theta_k, i_k, j_k)$ are the atoms of a Poisson random
measure $N^{(\nu)}$ and $\mathcal{S}_\alpha^\ast$ is a copy of
$\mathcal{S}_\alpha$ whose L\'{e}vy-It\^{o} representation is
built upon the Poisson random measure
$\sum_k\varepsilon_{(\theta_k, i_k)}$.

Under the additional assumption
\begin{equation}\label{degen}
\nu\{(x,y): 0<y<x\}=0,
\end{equation}
the convergence in \eqref{limit_perturbed01} holds in the
$J_1$-topology on $D$.
\end{thm}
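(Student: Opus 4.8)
The plan is to follow the point-process / continuous-mapping route. First I would lift the marginal statement \eqref{skor} to a joint point-process convergence. By \eqref{princ} and the i.i.d.\ structure, a standard result (see \cite{Resnick:2007}) gives
\begin{equation*}
N_n:=\sum_{k\geq 1}\varepsilon_{(k/n,\,\xi_k/a(n),\,\eta_k^+/a(n))}\ \Rightarrow\ N^{(\nu)}=\sum_k\varepsilon_{(\theta_k,\,i_k,\,j_k)}
\end{equation*}
in $\mathcal{M}_p([0,\infty)\times E)$. Here the first two coordinates carry the jumps of the walk and, through the L\'{e}vy--It\^{o} representation, assemble $\mathcal{S}^\ast_\alpha$, while the third records the perturbation sizes. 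The index shift $S_k+\eta_{k+1}=S_{(k+1)-1}+\eta_{k+1}$ is exactly what makes a large $\eta_{k+1}$ pair with the pre-jump level $S_k\approx \mathcal{S}^\ast_\alpha(\theta-)$, producing the term $\mathcal{S}^\ast_\alpha(\theta_k-)+j_k$ in the limit.

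Next I would truncate. Fix $\delta>0$, split each increment into its large part ($|\xi|>\delta a(n)$, resp.\ $\eta>\delta a(n)$) and small part, and set
\begin{equation*}
Y_n^{(\delta)}(t):=\max_{0\leq j\leq [nt]}\big(S_j^{>\delta}+\eta_{j+1}\1_{\{\eta_{j+1}>\delta a(n)\}}\big)/a(n),
\end{equation*}
where $S_j^{>\delta}$ are the partial sums of the large increments $\xi_i\1_{\{|\xi_i|>\delta a(n)\}}$, centered as dictated by the L\'{e}vy--It\^{o} decomposition. Its natural limiting counterpart is
\begin{equation*}
Y^{(\delta)}(t):=\sup_{s\leq t}\mathcal{S}^{\ast,>\delta}_\alpha(s)\ \vee\ \sup_{\theta_k\leq t,\,j_k>\delta}\big(\mathcal{S}^{\ast,>\delta}_\alpha(\theta_k-)+j_k\big),
\end{equation*}
with $\mathcal{S}^{\ast,>\delta}_\alpha$ the compound-Poisson skeleton of $\mathcal{S}^\ast_\alpha$. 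Both are images of $N_n$ and $N^{(\nu)}$ under the map that reads off the finitely many atoms in $[0,t]$, builds a piecewise-constant path, superimposes the thresholded perturbations and takes the running maximum; the continuous mapping theorem then yields $Y_n^{(\delta)}\Rightarrow Y^{(\delta)}$.

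The crux --- and the step I expect to be the main obstacle --- is the $M_1$-continuity of this max-functional at the limit point $N^{(\nu)}$. On the full-probability event that no two atoms share a time and that $\mathcal{S}^{\ast,>\delta}_\alpha$ realises its running supremum and its perturbed values at distinct, non-degenerate instants, the reconstructed path has finitely many jumps on $[0,t]$, and near an atom $\theta_k$ the prelimit makes at most two consecutive upward steps: first to $\mathcal{S}^\ast_\alpha(\theta_k-)+j_k$ (the perturbation, at index $m-1$) and then to $\mathcal{S}^\ast_\alpha(\theta_k)=\mathcal{S}^\ast_\alpha(\theta_k-)+i_k$ (the jump, at index $m$), with both indices converging to $\theta_k$. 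Because both steps are increases, the intermediate value $\mathcal{S}^\ast_\alpha(\theta_k-)+j_k$ lies on the completed graph between $Y^{(\delta)}(\theta_k-)$ and $Y^{(\delta)}(\theta_k)=Y^{(\delta)}(\theta_k-)\vee(\mathcal{S}^\ast_\alpha(\theta_k-)+(i_k\vee j_k))$; this is precisely what the $M_1$-topology tolerates, so the functional is $M_1$-continuous and the convergence holds in $M_1$. Under \eqref{degen} the configuration $0<j_k<i_k$ (equivalently $0<y<x$) is $\nu$-null, so at every atom at most one of the two steps is an actual increase of the running maximum, the limit has a genuine single jump there, the intermediate plateau disappears, and the convergence upgrades to $J_1$.

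Finally I would show the truncation is uniformly negligible. As $\delta\to 0$ one checks
\begin{equation*}
\lim_{\delta\to 0}\ \limsup_{n\to\infty}\ \mmp\{d_{M_1}(M_n/a(n),\,Y_n^{(\delta)})>\varepsilon\}=0,
\end{equation*}
where $M_n(t):=\max_{0\leq j\leq[nt]}(S_j+\eta_{j+1})$: the discarded small-jump part of the walk is controlled by a maximal/moment inequality so that $\sup_{s\leq t}|S_{[ns]}^{\leq\delta}|/a(n)$ is small (mirroring $\sup_{s\leq t}|\mathcal{S}^{\ast,\leq\delta}_\alpha(s)|\to 0$), and perturbations below $\delta a(n)$ raise $\max_j(S_j+\eta_{j+1})$ by at most $\delta$ above $\max_j S_j$. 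Together with $Y^{(\delta)}\to Y$ as $\delta\to 0$ (the omitted small perturbations being dominated by $\sup\mathcal{S}^\ast_\alpha$), a standard approximation lemma (Theorem 3.2 in \cite{Billingsley:1968}, in its $M_1$ form, cf.\ \cite{Whitt:2002}) delivers \eqref{limit_perturbed01}. The genuinely delicate point remains the behaviour at joint atoms --- disentangling the double increase caused by a simultaneously large jump and perturbation, and recognising \eqref{degen} as exactly the condition that removes it.
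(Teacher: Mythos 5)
Your proposal is correct in substance and reaches the theorem by a route that is recognisably the same family --- point-process convergence plus continuous mapping plus truncation --- but differs from the paper's in one structural choice that is worth comparing. You reconstruct the walk itself from the point process: you truncate \emph{both} coordinates, build the compound-Poisson skeleton $\mathcal{S}^{\ast,>\delta}_\alpha$ from the large $\xi$-atoms, and must then control the discarded small-jump part of the walk by a maximal inequality, with the compensation/centering this forces when $\alpha\geq 1$. The paper avoids that entirely: it keeps $S_{[n\cdot]}/a(n)$ as a separate coordinate, obtains the \emph{joint} convergence $\big(S_{[n\cdot]}/a(n),\sum_l\varepsilon_{(l/n,\,\xi_l/a(n),\,\eta_l^+/a(n))}\big)\Rightarrow\big(\mathcal{S}^\ast_\alpha,N^{(\nu)}\big)$ from Resnick's argument, passes to a.s.\ convergent versions by Skorokhod representation, and then invokes a purely deterministic theorem (Theorem \ref{main10}): if $f_n\to f_0$ in $J_1$ and $\nu_n\overset{{\rm v}}{\to}\nu_0$, then the running maxima converge in $M_1$, and in $J_1$ under \eqref{eq1dop}. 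In that deterministic theorem only the $y$-coordinates are truncated, and the error is bounded \emph{deterministically and uniformly} by $\varepsilon$ (inequality \eqref{121}), so no converging-together lemma and no small-jump estimate are needed; the $M_1$-part is then read off from the elementary criterion that monotone functions converge in $M_1$ iff they converge pointwise on a dense set containing $0$ (Lemma \ref{lem1}(a)), rather than from a continuity-of-the-functional argument on $\mathcal{M}_p$. Your graph-based analysis of what happens at a joint atom --- the two consecutive upward steps at indices $m-1$ and $m$, the intermediate value $\mathcal{S}^\ast_\alpha(\theta_k-)+j_k$ sitting on the completed graph, and \eqref{degen} being exactly the condition $j_k\geq i_k$ that collapses the double step into a single jump --- is precisely the right picture and matches the paper's Subcase 2.1 and its Remark on the necessity of \eqref{eq1dop}. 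The only places where your sketch leaves real work undone are (i) the uniform negligibility of the small-jump part $\sup_{s\leq t}|S^{\leq\delta}_{[ns]}|/a(n)$, which is standard but requires the correct centering for $\alpha\in[1,2)$, and (ii) a careful verification that the thresholded max-functional is a.s.\ continuous at $N^{(\nu)}$ (no atoms on the truncation boundary, no simultaneous atoms, none at time $0$); both are routine, and the paper's separation of $f_n$ from $\nu_n$ is essentially a device for never having to do (i) at all.
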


We proceed with a number of remarks.
\begin{rem}
It is perhaps worth stating explicitly that $N^{(\nu)}(\cdot\times
[-\infty,+\infty]\times \cdot)=\sum_k
\varepsilon_{(\theta_k,\,j_k)}$ is a Poisson random measure on
$[0,\infty)\times (0,\infty]$ with mean measure
$\mathbb{LEB}\times \mu_c$, where $\mu_c$ is a measure on
$(0,\infty]$ defined by
$$\mu_c\big((x,\infty]\big)=cx^{-\alpha},\quad x>0.$$ Analogously,
$N^{(\nu)}(\cdot\times\cdot\times [0,\infty])=\sum_k
\varepsilon_{(\theta_k,\,i_k)}$ is a Poisson random measure on
$[0,\infty)\times ([-\infty,+\infty]\backslash\{0\})$ with mean
measure $\mathbb{LEB}\times \nu^\ast$, where $\nu^\ast$ is the
L\'{e}vy measure of $\mathcal{S}_\alpha$ given by
$$\nu^\ast((x,\infty])=c_1 x^{-\alpha},\quad \nu^\ast((-\infty, -x])=c_2x^{-\alpha},\quad x>0.$$
\end{rem}
\begin{rem}
Condition \eqref{degen} obviously holds if the measure $\nu$ is
concentrated on the axes. This is the case whenever $\xi$ and
$\eta$ are independent and also in many cases when these are
dependent. For instance, take $\xi=|\log W|$ and
$\eta=|\log(1-W)|$ satisfying \eqref{princ} for a random variable $W$ taking values in
$(0,1)$ (details can be found in the proof of Theorem 1.1 in
\cite{Iksanov+Marynych+Vatutin:2015}).

Suppose now that $\eta=r\xi$ for some $r>0$. Then the restriction
of $\nu$ to the first quadrant concentrates on the line $y=rx$.
Hence, condition \eqref{degen} holds if, and only if, $r\geq 1$.

Let $\rho$ be a positive random variable such that $\mmp\{\rho>x\}\sim
x^{-\alpha}$ as $x\to\infty$ and $\zeta$ a random variable which is
independent of $\rho$ and takes values in $[-\pi,\pi)$. Setting $\xi=\rho
\cos\zeta$ and $\eta=\rho\sin\zeta$ we obtain
$$\mmp\{\xi>x\}~\sim~ (\me (\cos\zeta)^\alpha\1_{\{|\zeta|<\pi/2\}}) x^{-\alpha},~ \mmp\{-\xi>x\}~\sim~(\me
|\cos\zeta|^\alpha\1_{\{|\zeta|>\pi/2\}}) x^{-\alpha}$$ as
$x\to\infty$ by the Lebesgue dominated convergence theorem.
Furthermore,
$$
x\mmp\bigg\{\frac{(\xi,\eta^+)}{a(x)}\in
\cdot\bigg\}~\overset{{\rm v}}{\to}~\nu,\quad x\to\infty,
$$
where $a(x)=(\me |\cos\zeta|^\alpha)^{1/\alpha}x^{1/\alpha}$ and $\nu$ is
the image of the measure $$\frac{\alpha\me
(\cos\zeta)^\alpha\1_{\{\zeta\in (-\pi/2,\pi/2)\}}}{\me
|\cos\zeta|^\alpha}\1_{(r,\vf)\in(0,\infty)\times[0,\pi)}
r^{-\alpha-1}{\rm d}r \mmp\{\zeta\in {\rm d}\varphi\}$$
under the mapping $(r,\vf)\to (r\cos\vf, r\sin\vf)$.
Condition \eqref{degen} is equivalent to $\mmp\{\zeta\in (0,\pi/4)\}=0$.
\end{rem}

\begin{rem}
Weak convergence of nondecreasing processes in the $M_1$-topology
is not as strong as it might appear. Actually, it is equivalent to
weak convergence of finite-dimensional distributions just because
a sequence of nondecreasing processes is always tight on $D$
equipped with the $M_1$-topology. This follows from the fact that
the $M_1$-oscillation $$\omega_\delta(f):=\sup_{t_1\leq t\leq t_2,
0\leq t_2-t_1\leq \delta}\, M(f(t_1), f(t), f(t_2))$$ of a
nondecreasing function $f$ equals zero, where $M(x_1,x_2,x_3):=0$,
if $x_2\in [x_1, x_3]$, and $:=\min(|x_2-x_1|, |x_3-x_2|)$,
otherwise.
\end{rem}
\begin{rem}
From a look at Theorem \ref{main10} underlying the proof of Theorem \ref{main} it should be clear that a counterpart of Theorem \ref{main} also holds when replacing the input vectors $(\xi_k, \eta_k)_{k\in\mn}$ with arrays $\big(\xi_k^{(n)}, \eta_k^{(n)}\big)_{k\in\mn}$ for each $n\in\mn$. We however refrain from formulating such a generalization, for we are not aware of any potential applications of such a result.
\end{rem}

Propositions \ref{main1} and \ref{main2} given next are concerned
with the simpler situations (I) and (II), respectively.
\begin{assertion}\label{main1}
Suppose that conditions \eqref{cond10} and \eqref{cond100} hold
and that
\begin{equation}\label{cond1}
\lix \frac{\mmp\{\eta>x\}}{\mmp\{|\xi|>x\}}=0.
\end{equation}
Then
\begin{equation}\label{limit_perturbed0}
\frac{\underset{0\leq k\leq [n
\cdot]}{\max}\,(S_k+\eta_{k+1})}{a(n)}\quad\overset{{\rm
J_1}}{\Rightarrow}\quad \underset{0\leq s\leq
\cdot}{\sup}\,\mathcal{S}_\alpha(s),\quad n\to\infty.
\end{equation}
\end{assertion}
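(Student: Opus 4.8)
The plan is to show that the perturbations $\eta_{k+1}$ are asymptotically irrelevant for the running maximum, so that $a(n)^{-1}\max_{0\le k\le [n\cdot]}(S_k+\eta_{k+1})$ has the same $J_1$-limit as the unperturbed running maximum $a(n)^{-1}\max_{0\le k\le [n\cdot]}S_k$. Write $M_n(t):=\max_{0\le k\le [nt]}(S_k+\eta_{k+1})$ and $L_n(t):=\max_{0\le k\le [nt]}S_k=\sup_{0\le s\le t}S_{[ns]}$. Since the running-supremum map $x\mapsto \sup_{0\le s\le\cdot}x(s)$ is continuous on $D$ endowed with the $J_1$-topology, \eqref{skor} and the continuous mapping theorem give $a(n)^{-1}L_n(\cdot)\overset{{\rm J_1}}{\Rightarrow}\sup_{0\le s\le\cdot}\mathcal{S}_\alpha(s)$. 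Because uniform closeness on compacts entails $J_1$-closeness, the converging-together lemma reduces everything to proving that, for every $T>0$,
\[
\frac{1}{a(n)}\sup_{0\le t\le T}|M_n(t)-L_n(t)|\tp 0 .
\]

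The upper estimate is routine. From $S_k+\eta_{k+1}\le S_k+\eta_{k+1}^+\le S_k+\max_{1\le j\le [nt]+1}\eta_j^+$ and monotonicity of the maximum one gets $M_n(t)-L_n(t)\le \max_{1\le j\le [nt]+1}\eta_j^+$ for all $t$, hence $\sup_{0\le t\le T}(M_n(t)-L_n(t))^+\le \max_{1\le j\le [nT]+1}\eta_j^+$. A union bound together with regular variation and \eqref{cond1} yields, for each $\varepsilon>0$,
\[
\mmp\Big\{\max_{1\le j\le [nT]+1}\eta_j^+>\varepsilon a(n)\Big\}\le ([nT]+1)\,\mmp\{\eta>\varepsilon a(n)\}= \big(n\,\mmp\{|\xi|>\varepsilon a(n)\}\big)\,\frac{\mmp\{\eta>\varepsilon a(n)\}}{\mmp\{|\xi|>\varepsilon a(n)\}}\,T(1+o(1))\to 0,
\]
since $n\,\mmp\{|\xi|>\varepsilon a(n)\}\to \varepsilon^{-\alpha}$ while the ratio tends to $0$ by \eqref{cond1}. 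Thus $a(n)^{-1}\sup_{0\le t\le T}(M_n(t)-L_n(t))^+\tp 0$.

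The lower estimate, i.e. $a(n)^{-1}\sup_{0\le t\le T}(L_n(t)-M_n(t))^+\tp 0$, is the crux, since \eqref{cond1} says nothing about the \emph{negative} tail of $\eta$ and the crude bound $M_n(t)\ge L_n(t)-\max_j\eta_j^-$ is useless because $\max_j\eta_j^-$ need not be $o(a(n))$. Here I would exploit the one-large-jump structure underlying \eqref{skor}: for $\alpha\in(0,2)$ the limit $\mathcal{S}_\alpha$ is a pure-jump process, and with probability tending to one $L_n(t)$ is attained, up to an error $o(a(n))$ uniform in $t\in[0,T]$, at one of the finitely many indices $I_1<\dots<I_{\kappa_n}\le [nT]$ with $\xi_{I_j}>\delta a(n)$, the count $\kappa_n$ being tight (asymptotically Poisson with parameter of order $\delta^{-\alpha}$). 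The decisive point is that the event $\{I_j=m\}$ belongs to $\sigma\big((\xi_i,\eta_i)_{1\le i\le m}\big)$, so that $\eta_{I_j+1}$, coming from the independent pair $(\xi_{I_j+1},\eta_{I_j+1})$, is a fresh copy of $\eta$; consequently $\mmp\{\eta_{I_j+1}^->\varepsilon a(n)\mid I_j\}=\mmp\{\eta^->\varepsilon a(n)\}\to 0$, and a union bound over the tight number $\kappa_n$ of such indices shows that $\eta_{I_j+1}^-\le \varepsilon a(n)$ for all $j$ simultaneously, with probability tending to one. On that event, choosing for each $t$ the big-jump index $I_{j(t)}\le[nt]$ realizing $L_n(t)$ up to $o(a(n))$, we obtain $M_n(t)\ge S_{I_{j(t)}}+\eta_{I_{j(t)}+1}\ge L_n(t)-\varepsilon a(n)-o(a(n))$ uniformly in $t\in[0,T]$. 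Sending $n\to\infty$ and then $\delta,\varepsilon\to 0$ in the usual order gives the lower estimate.

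The main obstacle is precisely this lower bound: one must rule out that a large negative $\eta$ wipes out the maximum at its near-optimal sites, and this is exactly where the arbitrary dependence between $\xi$ and $\eta$ must be controlled; it is tamed by the observation that the $\eta$ \emph{following} a large positive jump of $S$ is independent of the location of that jump. Combining the two estimates yields $a(n)^{-1}\sup_{0\le t\le T}|M_n(t)-L_n(t)|\tp 0$, and the conclusion \eqref{limit_perturbed0} follows by converging together.
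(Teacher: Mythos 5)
Your reduction to $a(n)^{-1}\sup_{0\le t\le T}|M_n(t)-L_n(t)|\tp 0$ and your upper estimate are correct, and the latter is exactly the computation in the paper (its relation \eqref{cond11}). The paper's own proof is shorter and routed differently: it shows $a(n)^{-1}\max_{1\le j\le [nT]+1}\eta_j\tp 0$, passes to $(S_{[n\cdot]}+\eta_{[n\cdot]+1})/a(n)$ by Slutsky's lemma, and then applies the supremum functional, which is continuous in the $J_1$-topology; it does not engage separately with the negative part of $\eta$. You have correctly identified that \eqref{cond1} says nothing about the left tail of $\eta$, and this is precisely where your argument and the paper's diverge.

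However, your treatment of the lower estimate has a genuine gap: the claim that, with probability tending to one, $L_n(t)$ is attained up to $o(a(n))$, uniformly in $t\le T$, at one of the indices $I_j$ with $\xi_{I_j}>\delta a(n)$ is false in general. Take $\alpha\in(1,2)$ and $c_1=0$, $c_2=1$ in \eqref{cond100}, so that $\mathcal{S}_\alpha$ is spectrally negative. Then $n\,\mmp\{\xi>\delta a(n)\}\to c_1\delta^{-\alpha}=0$, hence with probability tending to one there is \emph{no} index $k\le [nT]$ with $\xi_k>\delta a(n)$ (i.e., $\kappa_n=0$), while $L_n(T)/a(n)$ converges in distribution to $\sup_{0\le s\le T}\mathcal{S}_\alpha(s)$, which is strictly positive a.s. In this case the running maximum is built up entirely by the accumulation of small increments, and the one-big-jump picture yields nothing. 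Even when $c_1>0$, the localization error is not $o(a(n))$ for fixed $\delta$; it only vanishes as $\delta\to 0$, at which point the number $\kappa_n$ of candidate indices, of order $c_1T\delta^{-\alpha}$, blows up, so the order of limits $n\to\infty$, then $\delta\to 0$, then $\varepsilon\to 0$ needs a justification you do not supply, and the pre-limit statement that the near-argmax of $S$ sits at a big positive jump is itself unproved. Your independence observation ($I_j$ is a stopping time for the filtration of $(\xi_i,\eta_i)_{i\ge1}$, so $\eta_{I_j+1}$ is a fresh copy of $\eta$) is correct and genuinely useful, but it is attached to a localization claim that fails; any repair must handle the case where the set of near-maximizers of $S$ is large and not tied to big jumps, for which the $\eta_{k+1}$'s can no longer be decoupled by a stopping-time argument.
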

\begin{rem}
When $\alpha\in (0,1)$ and $c_1=0$, the right-hand side in
\eqref{limit_perturbed0} is the zero function because
$\mathcal{S}_\alpha$ is then the negative of an $\alpha$-stable
subordinator (recall that a subordinator is a nondecreasing
L\'{e}vy process). In this setting there are two possibilities:
either $\sup_{k\geq 0}(S_k+\eta_{k+1})<\infty$ a.s.\ or
$\sup_{k\geq 0}(S_k+\eta_{k+1})=\infty$ a.s. Plainly, if the first
alternative prevails, much more than \eqref{limit_perturbed0} can
be said, namely, $\underset{0\leq k\leq [n
\cdot]}{\max}\,(S_k+\eta_{k+1})/r_n$ converges to the zero
function in the $J_1$-topology on $D$ for any positive sequence
$(r_n)$ diverging to $\infty$.

Now we intend to give examples showing that either of
possibilities can hold. By Theorem 2.1 in
\cite{Alsmeyer+Iksanov+Meiners:2015}, the supremum of
$S_k+\eta_{k+1}$ is finite a.s.\ if, and only if,
\begin{equation}\label{check}
\int_{(0,\infty)}\frac{x}{\int_0^x \mmp\{-\xi>y\}{\rm d}y}{\rm
d}\mmp\{\xi\leq y\}<\infty\quad \text{and}\quad
\int_{(0,\infty)}\frac{x}{\int_0^x \mmp\{-\xi>y\}{\rm d}y}{\rm
d}\mmp\{\eta\leq y\}<\infty
\end{equation}
If $\mmp\{-\xi>x\}\sim x^{-\alpha}$, $\mmp\{\xi>x\}\sim
x^{-\alpha}(\log x)^{-2}$ as $x\to\infty$ and $\me
(\eta^+)^\alpha<\infty$, then both inequalities in \eqref{check}
hold, whereas if $\mmp\{-\xi>x\}\sim x^{-\alpha}$,
$\mmp\{\xi>x\}\sim x^{-\alpha}(\log x)^{-1}$ as $x\to\infty$, then
the first integral in \eqref{check} diverges.
\end{rem}

\begin{assertion}\label{main2}
Suppose that conditions \eqref{cond10} and \eqref{cond100} hold,
that
\begin{equation}\label{cond12}
\lix \frac{\mmp\{\eta>x\}}{\mmp\{|\xi|>x\}}=\infty
\end{equation}
and that $\mmp\{\eta>x\}$ is regularly varying at $\infty$ of
index $-\beta$ (necessarily $\beta\in (0,\alpha]$). Let $b(x)$ be
a positive function which satisfies $\lix x\mmp\{\eta>b(x)\}=1$.
Then
\begin{equation}\label{limit_perturbed1} \frac{\underset{0\leq
k\leq [n \cdot]}{\max}\,(S_k+\eta_{k+1})}{b(n)}~\overset{{\rm
J_1}}{\Rightarrow}~ \sup_{\theta_k \leq \cdot}\,j_k,\quad
n\to\infty,
\end{equation}
where $(\theta_k, j_k)$ are the atoms of a Poisson random measure
on $[0,\infty)\times (0,\infty]$ with mean measure
$\mathbb{LEB}\times \mu$, where $\mu$ is a measure on $(0,\infty]$
defined by
$$\mu\big((x,\infty]\big)=x^{-\beta},\quad x>0.$$
\end{assertion}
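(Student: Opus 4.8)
The plan is to exploit that, by condition \eqref{cond12}, $\eta$ has strictly heavier tails than $\xi$, so that on the scale $b(n)$ the maxima $\underset{0\le k\le[n\cdot]}{\max}\,(S_k+\eta_{k+1})$ become asymptotically indistinguishable from the plain maxima $\underset{0\le k\le[n\cdot]}{\max}\,\eta_{k+1}$, whose limit is the Fr\'echet extremal process on the right-hand side of \eqref{limit_perturbed1}. The proof thus splits into a classical functional extreme value statement for the i.i.d.\ sequence $(\eta_k)$ and a negligibility estimate for the random walk $(S_k)$. For the former, regular variation of $x\mapsto\mmp\{\eta>x\}$ of index $-\beta$ places $\eta$ in the domain of attraction of the Fr\'echet law. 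By classical extreme value theory (see, e.g., \cite{Resnick:2007}), the point processes $\sum_{k\ge 0}\varepsilon_{(k/n,\,\eta_{k+1}/b(n))}$ converge weakly on $\mathcal{M}_p([0,\infty)\times(0,\infty])$ to the Poisson random measure with mean measure $\mathbb{LEB}\times\mu$; applying the almost surely continuous running-maximum functional yields
\be
\frac{\underset{0\le k\le[n\cdot]}{\max}\,\eta_{k+1}}{b(n)}~\overset{{\rm J_1}}{\Rightarrow}~\sup_{\theta_k\le\cdot}\,j_k,\quad n\to\infty,
\ee
in the $J_1$-topology, the limit being a nondecreasing pure-jump process. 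The harmless shift of index from $\eta_{k+1}$ to $\eta_k$ and the single extra summand are immaterial.

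The quantitative heart of the negligibility estimate is the scale separation $a(n)=o(b(n))$, which I would establish as follows. Fixing $\ve>0$, regular variation of $x\mapsto\mmp\{|\xi|>x\}$ gives $\mmp\{|\xi|>\ve b(n)\}\sim\ve^{-\alpha}\mmp\{|\xi|>b(n)\}$, while \eqref{cond12} combined with $n\mmp\{\eta>b(n)\}\to 1$ forces $\mmp\{|\xi|>b(n)\}=o(\mmp\{\eta>b(n)\})=o(1/n)$. Hence $n\mmp\{|\xi|>\ve b(n)\}\to 0$, whereas $n\mmp\{|\xi|>a(n)\}\to 1$; by monotonicity of the tail this yields $\ve b(n)>a(n)$ for all large $n$, and since $\ve$ is arbitrary, $a(n)/b(n)\to 0$. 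Together with \eqref{skor} and the resulting tightness of $S_{[n\cdot]}/a(n)$ on each interval $[0,T]$, this delivers $b(n)^{-1}\underset{0\le j\le[nT]}{\max}\,|S_j|\tp 0$.

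It then remains to compare the two families of maxima. The elementary bound $\underset{0\le k\le[nt]}{\max}\,(S_k+\eta_{k+1})\le\underset{0\le j\le[nt]}{\max}\,S_j+\underset{0\le k\le[nt]}{\max}\,\eta_{k+1}$ gives the upper estimate, and, taking $k^\ast$ to be an index at which $\eta_{k+1}$ attains its maximum over $0\le k\le[nt]$ (so that $S_{k^\ast}\ge\underset{0\le j\le[nt]}{\min}\,S_j$), the reverse bound $\underset{0\le k\le[nt]}{\max}\,(S_k+\eta_{k+1})\ge S_{k^\ast}+\underset{0\le k\le[nt]}{\max}\,\eta_{k+1}$ gives the lower estimate. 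Replacing $[nt]$ by $[nT]$ in the outer terms by monotonicity in $t$, one obtains for every $t\in[0,T]$ the sandwich
\be
\frac{\underset{0\le j\le[nT]}{\min}\,S_j}{b(n)}~\le~\frac{\underset{0\le k\le[nt]}{\max}\,(S_k+\eta_{k+1})}{b(n)}-\frac{\underset{0\le k\le[nt]}{\max}\,\eta_{k+1}}{b(n)}~\le~\frac{\underset{0\le j\le[nT]}{\max}\,S_j}{b(n)}.
\ee
By the previous paragraph both outer terms tend to $0$ in probability, so the two rescaled maxima processes are uniformly close on $[0,T]$ in probability. Since uniform closeness dominates the $J_1$ distance, a converging-together argument upgrades the limit displayed above to the assertion \eqref{limit_perturbed1}.

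The main obstacle, though conceptually modest, is precisely the scale separation $a(n)=o(b(n))$ in the boundary case $\beta=\alpha$: there $a$ and $b$ are regularly varying of the same index $1/\alpha$, and their separation is carried entirely by the slowly varying factors, so the argument must lean on \eqref{cond12} rather than on a comparison of indices. It is worth stressing that, by contrast with the comparable regime of Theorem \ref{main}, no $M_1$ analysis is needed here: since $\eta$ strictly dominates, the random walk enters only as a vanishing additive perturbation, and the clean $J_1$ convergence of the Fr\'echet extremal process persists.
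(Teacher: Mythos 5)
Your argument is correct, but it takes a genuinely different route from the paper. The paper deduces Proposition \ref{main2} from its deterministic Theorem \ref{main10} (in the simplified form of Remark \ref{re2}), applied with $f_0=\Xi\equiv 0$: it first records the joint convergence of $S_{[n\cdot]}/b(n)$ (to the zero function) and of the point process $\sum_{k\ge 0}\varepsilon_{(k/n,\,\eta_{k+1}/b(n))}\1_{\{\eta_{k+1}>0\}}$ (to the Poisson random measure with mean measure $\mathbb{LEB}\times\mu$), passes to a.s.\ convergent versions via Skorokhod representation, and then invokes the continuous-mapping machinery built for Theorem \ref{main}. You instead bypass Theorem \ref{main10} entirely: you take the classical Fr\'echet extremal-process limit for $\max_{0\le k\le[n\cdot]}\eta_{k+1}/b(n)$ as known, and control the discrepancy with the perturbed maxima by the sandwich $\min_{0\le j\le[nT]}S_j\le\max_{0\le k\le[nt]}(S_k+\eta_{k+1})-\max_{0\le k\le[nt]}\eta_{k+1}\le\max_{0\le j\le[nT]}S_j$, both bounds being $o_{\rm P}(b(n))$ uniformly on $[0,T]$; a converging-together argument in the uniform metric (which dominates $J_1$) then finishes the proof. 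Both routes hinge on the scale separation $b(n)/a(n)\to\infty$, which the paper asserts as an immediate consequence of \eqref{cond12} while you derive it carefully via the tail comparison $n\mmp\{|\xi|>\ve b(n)\}\to 0$ versus $n\mmp\{|\xi|>a(n)\}\to 1$ --- a welcome amount of detail, especially in the boundary case $\beta=\alpha$ that you rightly single out. Your proof is more elementary and self-contained for this particular proposition; the paper's proof buys uniformity, treating Proposition \ref{main2} with the same lemma that drives Theorem \ref{main} and Proposition \ref{main1}. Two cosmetic points: you should say explicitly that the $J_1$ convergence on $D[0,\infty)$ is obtained by establishing it on $D[0,T]$ for all $T$ in a dense set of continuity points of the limit, and that the reduction from $\max_k(S_k+\eta_{k+1})$ to $\max_k\eta_{k+1}$ via the uniform bound is legitimate precisely because the uniform metric on $[0,T]$ dominates the Skorokhod $J_1$ metric there; but neither affects the validity of the argument.
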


Whenever the random series $\sum_{k\geq 0}e^{T_{k+1}}$ converges
a.s., its sum is called {\it perpetuity} due to its occurrence in
the realm of insurance and finance as a sum of discounted payment
streams. When the random series diverges, it is natural to
investigate weak convergence on $D$ of its partial sums, properly
rescaled, as the number of summands becomes large. Some results of
this flavor can be found in \cite{Buraczewski+Iksanov:2015} and
\cite{Iksanov:2017} (in these works many references to earlier
one-dimensional results can be found). Here, we prove functional
limit theorems in the situations that remained untouched.
\begin{thm}\label{main3}
In the settings of Theorem \ref{main} and Propositions \ref{main1}
and \ref{main2} functional limit theorems hold with $\log
\bigg(\sum_{k=0}^{[n\cdot]}e^{T_{k+1}}\bigg)$ replacing $\max_{0\leq
k\leq [n\cdot]}\,T_{k+1}$.
\end{thm}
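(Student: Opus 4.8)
The plan is to show that, after normalization, the functional $t \mapsto \log\big(\sum_{k=0}^{[nt]} e^{T_{k+1}}\big)$ is asymptotically indistinguishable from the running maximum $t \mapsto \max_{0\le k\le [nt]} T_{k+1}$, and then to transport the already-established weak convergence by a converging-together argument. Recall that $T_{k+1} = S_k + \eta_{k+1}$, so that the running maximum here is exactly the object treated in Theorem \ref{main} and Propositions \ref{main1} and \ref{main2}.

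The engine is the elementary two-sided bound valid for any reals $x_0, \ldots, x_m$:
$$\max_{0\le k\le m} x_k \;\le\; \log\Big(\sum_{k=0}^m e^{x_k}\Big)\;\le\; \max_{0\le k\le m}x_k+\log(m+1),$$
where the left inequality holds because a single term already exceeds $e^{\max_k x_k}$ and the right one because the sum is at most $(m+1)$ times its largest term. Writing $c(n)$ for the pertinent normalizing function ($c=a$ in the settings of Theorem \ref{main} and Proposition \ref{main1}, and $c=b$ in that of Proposition \ref{main2}) and applying the bound with $x_k = T_{k+1}$, $m=[nt]$, I obtain, for every $t\ge 0$,
$$0\;\le\; \frac{\log\big(\sum_{k=0}^{[nt]} e^{T_{k+1}}\big)}{c(n)}-\frac{\max_{0\le k\le [nt]}T_{k+1}}{c(n)}\;\le\; \frac{\log([nt]+1)}{c(n)}.$$
Both normalized processes are nondecreasing, right-continuous step functions, hence members of $D$. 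Fixing $T>0$ and using monotonicity of the right-hand side in $t$, the difference of the two processes is bounded on $[0,T]$ by the deterministic quantity $\log([nT]+1)/c(n)$. Since $c$ is regularly varying at $\infty$ of positive index ($1/\alpha$ with $\alpha\in(0,2)$ when $c=a$, and $1/\beta$ with $\beta\in(0,\alpha]$ when $c=b$), $c(n)$ eventually dominates any power of $\log n$, so this bound tends to $0$ as $n\to\infty$. Thus the two normalized processes coincide asymptotically in the uniform metric on $[0,T]$, deterministically, for every $T>0$.

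To finish I would transfer the weak convergence. Because the $M_1$- and $J_1$-metrics on $D[0,T]$ are each dominated by the uniform metric there, the preceding display shows that the $M_1$- (resp. $J_1$-) distance between the restrictions of the two normalized processes to $[0,T]$ tends to $0$. The running maximum, restricted to $[0,T]$, converges weakly in the relevant topology by Theorem \ref{main} (resp. Propositions \ref{main1}, \ref{main2}) --- the restriction being a continuity operation because the limit processes, as suprema of processes without fixed discontinuities, are a.s. continuous at the deterministic time $T$. The converging-together theorem (see, e.g., \cite{Billingsley:1968}) then yields weak convergence of $\log\big(\sum_{k=0}^{[n\cdot]} e^{T_{k+1}}\big)/c(n)$ to the same limit on $D[0,T]$; letting $T$ range over a dense set gives the claimed convergence on $D[0,\infty)$ in each of the three settings.

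I do not expect a genuine obstacle, since the whole argument rests on the deterministic log-sum-exp bound and on the polynomial-order growth of $c(n)$ against the merely logarithmic error. The only points deserving care are that the converging-together transfer be performed within the $M_1$-topology (legitimate because the uniform metric dominates the $M_1$-metric, cf.\ \cite{Whitt:2002}) and that the passage from $D[0,T]$ to $D[0,\infty)$ use continuity points of the limit, which here are all deterministic $T$.
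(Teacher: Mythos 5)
Your proposal is correct and follows essentially the same route as the paper: the two-sided bound $\max_{0\le k\le m}x_k \le \log\big(\sum_{k=0}^m e^{x_k}\big)\le \max_{0\le k\le m}x_k+\log(m+1)$ combined with $\lim_{n\to\infty}\log n/c(n)=0$ (from regular variation of $c$ of positive index), followed by a converging-together transfer of the weak convergence from the running maximum. The paper's proof is just a terser version of the same argument; your extra care about restricting to $D[0,T]$ and the domination of the $J_1$- and $M_1$-metrics by the uniform metric is sound but not a different method.
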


\section{Proof of Theorem \ref{main}}

For each $n\in\mn$, let $\big(x^{(n)}_i, y^{(n)}_i\big)_{i\in\mn}$ be a
sequence of $\mr^2$-valued vectors. Put $S_0^{(n)}:=0$,
$$
S^{(n)}_k:=\sum_{i=1}^k x^{(n)}_i,\quad k\in\mn, \quad T^{(n)}_k:=S_k^{(n)} +
y^{(n)}_{k+1}, \quad k\in\mn_0
$$
and then define the piecewise constant functions
$$
f_n(t):=\sum_{k\geq 0} S^{(n)}_k
\1_{[\frac{k}{n},\frac{k+1}{n})}(t),\quad g_n(t):=\max_{0\leq
k\leq [nt]} T^{(n)}_k,\quad t\geq 0
$$
where $\1_A(x)=1$ if $x\in A$ and $=0$, otherwise.

To proceed, we have to recall the notation
$E=[-\infty,+\infty]\times [0,\infty]\backslash\{(0,0)\}$. The proof
of Theorem \ref{main} is essentially based on the following
deterministic result along with the continuous mapping theorem.
\begin{thm}\label{main10}
Let $f_0\in D$ and $\nu_0=\sum_k\varepsilon_{(t_k, x_k,y_k)}$ be a
Radon measure on $[0,\infty)\times E$ satisfying
$\nu_0(\{0\}\times E)=0$ and $t_k\neq t_j$ for $k\neq j$. Suppose
that
\begin{equation}\label{eq1}
\lin f_n= f_0
\end{equation}
in the $J_1$-topology on $D$ and that
\begin{equation}\label{eq1_1}
\nu_n:=\sum_{i\geq
1}\varepsilon_{(i/n,\,x_i^{(n)},\,y_i^{(n)})}\1_{\{y_i^{(n)}>0\}}\overset{{\rm
v}}{\to}\nu_0,\quad n\to\infty
\end{equation}
on $\mathcal{M}_p([0,\infty)\times E)$. Then $$\lin g_n=g_0:=\sup_{0\leq
s\leq \cdot}\,f_0(s)\vee \sup_{t_k\leq \cdot}\,(f_0(t_k-)+y_k)$$
in the $M_1$-topology on $D$. This convergence holds in the
$J_1$-topology on $D$ under the additional assumption
\begin{equation}\label{eq1dop}
\nu_0([0,\infty)\times\{(x,y): 0<y<x\})=0.
\end{equation}
\end{thm}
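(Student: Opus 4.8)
The plan is to treat $g_n$ as a nondecreasing function and to exploit the fact that, for nondecreasing elements of $D$, convergence in the $M_1$-topology is equivalent to pointwise convergence on a dense set of points (the deterministic counterpart of the principle recorded in Remark 3). Since $[nt]$ is nondecreasing in $t$, both $g_n$ and the candidate limit $g_0$ are nondecreasing, so it suffices to prove $\lin g_n(t)=g_0(t)$ at every continuity point $t$ of $g_0$. I would split the limit as $g_0(t)=\sup_{0\le s\le t}f_0(s)\vee \max_{t_k\le t}(f_0(t_k-)+y_k)$ and match the two parts to the two mechanisms producing the maximum: the unperturbed running maximum of the walk, and the finitely many large positive perturbations corresponding to atoms of $\nu_0$.

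For the pointwise statement I would fix a level $\ve>0$ which is not charged by the third marginal of $\nu_0$ and decompose
$$g_n(t)=\Big(\max_{0\le k\le [nt],\,y_{k+1}^{(n)}\le \ve}(S_k^{(n)}+y_{k+1}^{(n)})\Big)\vee\Big(\max_{0\le k\le [nt],\,y_{k+1}^{(n)}>\ve}(S_k^{(n)}+y_{k+1}^{(n)})\Big).$$
The first term lies between $h_n(t):=\max_{0\le k\le [nt]}S_k^{(n)}$ and $h_n(t)+\ve$; since the supremum functional is continuous on $(D,J_1)$ and $h_n=\sup_{0\le s\le\cdot}f_n(s)$, assumption \eqref{eq1} gives $h_n(t)\to \sup_{0\le s\le t}f_0(s)$ at continuity points. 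For the second term, \eqref{eq1_1} guarantees that for large $n$ the indices $i$ with $i/n\le t$ and $y_i^{(n)}>\ve$ are finite in number and converge, together with $(x_i^{(n)},y_i^{(n)})$, to the atoms $(t_l,x_l,y_l)$ of $\nu_0$ with $t_l\le t$ and $y_l>\ve$. The crucial step is then to read off the correct limiting value of $T_{i-1}^{(n)}=S_{i-1}^{(n)}+y_i^{(n)}$: using \eqref{eq1} I would show that the perturbation index $i$ asymptotically coincides with the index at which $f_n$ makes the increment approximating the jump $x_l$ of $f_0$ at $t_l$, whence $S_{i-1}^{(n)}\to f_0(t_l-)$ and therefore $T_{i-1}^{(n)}\to f_0(t_l-)+y_l$. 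Letting $\ve\downarrow 0$ along admissible levels yields the upper bound. The matching lower bound follows from $g_n(t)\ge h_n(t)$ (the relevant perturbations being nonnegative) together with $g_n(t)\ge T_{i-1}^{(n)}$ for each large-perturbation index.

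The $M_1$-convergence now follows from monotonicity and the pointwise convergence just established. For the $J_1$-refinement under \eqref{eq1dop}, I would examine the local picture near an atom time $t_l$ with $y_l>0$. As the perturbation index is crossed, $g_n$ may increase twice in quick succession: once to $\approx f_0(t_l-)+y_l$ (the perturbation) and once to $\approx f_0(t_l)=f_0(t_l-)+x_l$ (the new record of the walk). When $0<y_l<x_l$ these two increments are genuinely distinct yet collapse to the single time $t_l$ in the limit, which is exactly what forces the $M_1$- rather than the $J_1$-topology. Condition \eqref{eq1dop} excludes precisely the region $\{0<y<x\}$, so for every atom one has $x_l\le y_l$ (or $x_l\le 0$); the walk record is then dominated by the perturbation jump, and near $t_l$ the function $g_n$ performs a single upward jump matching that of $g_0$. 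I would make this rigorous by constructing a strictly increasing time-change $\lambda_n$ with $\sup_t|\lambda_n(t)-t|\to 0$ that sends the jump epochs of $g_n$ to those of $g_0$, treating the record jumps of the walk (handled automatically by the $J_1$-continuity of the supremum functional) and the perturbation jumps simultaneously.

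The main obstacle I anticipate is this last alignment in the $J_1$-case: one must show that, away from the finitely many perturbation atoms per compact interval, the only upward jumps of $g_n$ arise from walk records and are therefore controlled by the $J_1$-convergence $h_n\to\sup_{0\le s\le\cdot}f_0(s)$, and that no spurious pair of jumps of $g_n$ converges to a common limit point. The book-keeping that aligns both families of jumps at once, and the verification that \eqref{eq1dop} is exactly the condition ruling out the double jump, is where the real work lies; the $M_1$-part, by contrast, is comparatively soft because monotonicity reduces it to the pointwise limit.
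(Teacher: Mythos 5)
Your proposal follows essentially the same route as the paper's proof: reduce the $M_1$-statement to pointwise convergence at continuity points via monotonicity, truncate the perturbations at a level $\varepsilon$ not charged by the third marginal of $\nu_0$, identify the limit $f_0(t_l-)+y_l$ of the large-perturbation terms by matching the perturbation index with the jump of $f_n$ approximating the jump of $f_0$ at $t_l$, and then diagnose the $J_1$-failure as the double jump occurring when $0<y_l<x_l$, which is exactly what \eqref{eq1dop} excludes. The paper carries out the $J_1$-alignment you sketch by verifying the Jacod--Shiryaev criterion (matching left and right limits along sequences $s_n\to\bar s$ at each discontinuity of $g_{0,\varepsilon}$, then controlling the $\varepsilon$-truncation error in the Skorokhod metric) rather than by an explicit global time change, but this is the same idea in an equivalent form.
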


\begin{rem}\label{re2}
Suppose that $f_0$ is continuous and that the set of points
$(t_k,y_k)$ with $y_k>0$ is dense in $[0,\infty)$. Then
$$
g_0(\cdot)=\sup_{t_k\leq \cdot}\,(f_0(t_k)+ y_k).
$$
Furthermore, condition \eqref{eq1dop} holds automatically, and
condition \eqref{eq1_1} is equivalent to
\begin{equation*}
\sum_{i\geq
1}\varepsilon_{(i/n,\,y_i^{(n)})}\1_{\{y_i^{(n)}>0\}}\overset{{\rm
v}}{\to}\sum_k \varepsilon_{(t_k, y_k)}\quad n\to\infty.
\end{equation*}
on $\mathcal{M}_p([0,\infty)\times (0,\infty])$.
This is
the setting of Theorem 1.3 in \cite{Iksanov+Pilipenko:2014}.
\end{rem}
\begin{rem}
Here, we discuss the necessity of condition \eqref{eq1dop} for the
$J_1$-convergence. Suppose that in the setting of Theorem
\ref{main10} there exists $k\in\mn$ such that $0<y_k<x_k$, so that
condition \eqref{eq1dop} does not hold. Now we give an example in
which the $J_1$-convergence in Theorem \ref{main10} fails to hold.
With $x^{(n)}_i=y^{(n)}_i=0$ for $i\neq[n/2]$, $x^{(n)}_{[n/2]}=2$
and $y^{(n)}_{[n/2]}=1$ we have $f_n(t)=2\1_{[[n/2]/n,\infty)}(t)$
and
$$
g_n(t)=\begin{cases}
0, \ t<\frac{[n/2]-1}n,\\
1, \frac{[n/2]-1}n\leq t< \frac{[n/2]}n,\\
2, \frac{[n/2]}n\leq t.
\end{cases}
$$
Plainly, condition \eqref{eq1_1} holds with
$\nu_0=\varepsilon_{(1/2, 2,1)}$. Setting
$f_0(t)=g(t):=2\1_{[1/2,\,\infty)}(t)$ we conclude that $\lin
f_n=f_0$ in the $J_1$-topology and $\lin g_n=g$ in the
$M_1$-topology. On the other hand, $g_n$ has a jump of magnitude
$1$ at point $[n/2]/n$. Furthermore, this magnitude does not
converge to $2$, the size of the limit jump at point $1/2$. This
precludes the $J_1$-convergence.
\end{rem}

Lemma \ref{lem1} given next collects known criteria for the
convergence of nondecreasing functions in the $J_1$- and
$M_1$-topologies. For part (a), see Corollary 12.5.1 and Lemma
12.5.1 in \cite{Whitt:2002}. While one implication of part (b) is
standard, the other follows from Theorem 2.15 on p.~342 and Lemma
2.22 on p.~343 in \cite{Jacod+Shiryaev:2003}.

\begin{lemma}\label{lem1}
Let $(h_n)_{n\in\mn_0}$ be a sequence of nondecreasing functions
in $D$.

\noindent (a) $\lin h_n=h_0$ in the $M_1$-topology on $D$ if, and
only if, $h_n(t)$ converges to $h_0(t)$ for each $t$ in a dense
subset of continuity points of $h_0$ including zero.

\noindent (b) $\lin h_n=h_0$ in the $J_1$-topology on $D$ if, and
only if, $\lin h_n=h_0$ in the $M_1$-topology on $D$ and for any
discontinuity point $s$ of $h_0$ there exists a sequence
$(s_n)_{n\in\mn}$ such that $\lin s_n=s$,
$$ \lin h_n (s_n-)=h(s-)\quad \text{and}\quad \lin
h_n(s_n)=h(s).$$
\end{lemma}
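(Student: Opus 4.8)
The plan is to prove the two parts separately, using throughout that monotonicity collapses much of the complexity of both topologies. For part (a), the forward implication is a general feature of the $M_1$-topology and needs no monotonicity: if $h_n\to h_0$ in $M_1$ then $h_n(t)\to h_0(t)$ at every continuity point $t$ of $h_0$, and since $h_0\in D$ has at most countably many jumps these points are dense, with convergence at $0$ being built into convergence in $D[0,\infty)$. For the converse I would run the monotone sandwiching argument: assuming $h_n(t)\to h_0(t)$ on a dense set $\mathcal D$ of continuity points containing $0$, pick $s<u<t$ with $s,t\in\mathcal D$; monotonicity gives $h_n(s)\le h_n(u)\le h_n(t)$, hence $h_0(s)\le \liminf_n h_n(u)\le \limsup_n h_n(u)\le h_0(t)$, and letting $s\uparrow u$, $t\downarrow u$ through $\mathcal D$ traps $h_n(u)$ asymptotically inside $[h_0(u-),h_0(u)]$. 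This is precisely the pointwise control that, fed into the completed-graph / parametric-representation picture, forces Hausdorff convergence of the (monotone) completed graphs and hence $M_1$-convergence; rather than reproduce the parametrization bookkeeping I would invoke Corollary 12.5.1 and Lemma 12.5.1 of \cite{Whitt:2002}.

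For part (b) the easy implication is $J_1\Rightarrow M_1+\text{jump matching}$. Since the identity map $(D,J_1)\to(D,M_1)$ is continuous, $M_1$-convergence comes for free. For the jump condition, take time-changes $\lambda_n$ with $\sup_{t\le T}|\lambda_n(t)-t|\to0$ and $h_n\circ\lambda_n\to h_0$ locally uniformly, and for a jump point $s$ of $h_0$ set $s_n:=\lambda_n(s)\to s$. Then $h_n(s_n)=h_n(\lambda_n(s))\to h_0(s)$ directly, while monotonicity gives $h_n(s_n-)=\sup_{u<s}h_n(\lambda_n(u))$ (because $\lambda_n$ is an increasing bijection), so $|h_n(s_n-)-h_0(s-)|\le\sup_{u<s}|h_n(\lambda_n(u))-h_0(u)|\to0$; thus $h_n(s_n-)\to h_0(s-)$ as required.

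The reverse implication of part (b) is the only genuinely nontrivial ingredient, and this is where I would rely on \cite{Jacod+Shiryaev:2003}. The strategy is that, because $h_0$ is nondecreasing, its jumps are summable on each compact interval, so for every $\varepsilon>0$ only finitely many jumps in $[0,T]$ exceed $\varepsilon$; the jump-matching hypothesis aligns these finitely many large jumps of $h_n$ with those of $h_0$ (locations converging, sizes converging), while the assumed $M_1$-convergence controls the small-jump remainder uniformly. One then splices together time-changes that are exact at the matched large jumps and nearly the identity elsewhere. Turning this picture into honest homeomorphisms realizing $J_1$-convergence is carried out by Theorem 2.15 (p.~342) and Lemma 2.22 (p.~343) in \cite{Jacod+Shiryaev:2003}, which I would cite for this direction.

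I expect the main obstacle to be exactly this last step: manufacturing a single family $(\lambda_n)$ of homeomorphisms that simultaneously fixes the finitely many matched jumps and keeps the composition $h_n\circ\lambda_n$ close to $h_0$ uniformly on compacts, including on the small-jump part where $M_1$ gives only graph-level (rather than pointwise-uniform) control. Monotonicity is what keeps this feasible---finitely many large jumps, no genuine oscillation to contend with---but the quantitative gluing remains delicate, which is why I would lean on the cited Jacod--Shiryaev results rather than build it by hand.
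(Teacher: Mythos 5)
Your proposal is correct and follows essentially the same route as the paper, which in fact offers no proof of this lemma beyond pointing to Corollary 12.5.1 and Lemma 12.5.1 in \cite{Whitt:2002} for part (a) and to Theorem 2.15 and Lemma 2.22 in \cite{Jacod+Shiryaev:2003} for the nontrivial direction of part (b) --- exactly the results you invoke for the hard steps. Your additional sketches of the elementary directions (the monotone sandwich for the converse of (a), and $s_n=\lambda_n(s)$ for the jump-matching in (b)) are sound.
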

\begin{proof}[Proof of Theorem \ref{main10}]
We start by showing that $g_0\in D$. Since $g_0$ is nondecreasing,
it has finite limits from the left on $(0,\infty)$. Using
right-continuity of $f_0$ we obtain
\begin{eqnarray*}
g_0(t)&\leq& \lim_{\delta\to 0+}\, g_0(t+\delta)= \lim_{\delta\to
0+}\,\big(\sup_{0\leq s<t+\delta}\,f_0(s)\vee \sup_{t_k\leq
t+\delta}\,(f_0(t_k-)+y_k)\big)\\&=& \sup_{0\leq s\leq
t}\,f_0(s)\vee {\underset{\delta\to 0+}{\lim}} \sup_{t_k\leq
t+\delta}\,(f_0(t_k-)+y_k)\\&\leq& \sup_{0\leq s\leq
t}\,f_0(s)\vee \sup_{t_k\leq t}\,(f_0(t_k-)+y_k)\vee
 {\underset{\delta\to 0+}{\lim}} \sup_{t<t_k\leq
 t+\delta}\,(f_0(t_k-)+y_k)\\&\leq&
\sup_{0\leq s\leq t}\,f_0(s)\vee \sup_{t_k\leq
t}\,(f_0(t_k-)+y_k)\vee
 \big({\underset{\delta\to 0+}{\lim}} \sup_{t<t_k\leq t+\delta}\,f_0(t_k-)
 + \, {\underset{\delta\to 0+}{\lim}}\sup_{t<t_k\leq
 t+\delta}y_k\big)\\&=&
\sup_{0\leq s\leq t}\,f_0(s)\vee \sup_{t_k\leq
t}\,(f_0(t_k-)+y_k)\vee (f_0(t)+0)=g_0(t)
\end{eqnarray*}
for any $t>0$ which proves right-continuity of $g_0$.

\noindent {\sc Proof of the $M_1$-convergence}. Since $f_0,g_0\in D$, they have at most countably many discontinuities.
Hence, the set
$$K:=\{T\geq 0: \nu_0(\{T\}\times E)=0;\quad T~\text{is a continuity point of}~ f_0~\text{and
continuity point of}~ g_0\}$$ is dense in $[0,\infty)$. 
Since $g_n$ is nondecreasing for each $n\in\mn$, according to
Lemma \ref{lem1} (a), it suffices to prove that
\begin{equation}\label{conv}
\lin g_n(T)=g_0(T)
\end{equation}
for all $T\in K$. Observe that $g_0(0)=0$ as a consequence of
$f_n(0)=f_0(0)=0$ and $\nu_0(\{0\}\times E)=0$. The last condition
ensures that $g_n(0)=y_1^{(n)}$ converges to zero as $n\to\infty$.
This proves that relation \eqref{conv} holds for $T=0$. Thus, in
what follows we assume that $T\in K$ and $T>0$.

Fix any such a $T$. There exists a sequence
$(\varepsilon_k)_{k\in\mn}$ that vanishes as $k\to\infty$ and such
that its generic element denoted by $\varepsilon$ is a continuity
point of the nonincreasing function $$x\mapsto \nu_0([0,T]\times
[-\infty,+\infty]\times (x,\infty]),$$ so that $\nu_0([0,T]\times
[-\infty,+\infty]\times \{\varepsilon\})=0$. Put
$E_\varepsilon:=[-\infty, +\infty]\times (\varepsilon,\infty]$.
Condition \eqref{eq1_1} implies that $\nu_0([0,T]\times
E_\varepsilon)=\nu_n([0,T]\times E_\varepsilon)=m$ for large
enough $n$ and some $m\in\mn_0$, where the finiteness of $m$ is
secured by the fact that $\nu_0$ is a Radon measure. The case
$m=0$ is trivial. Hence, in what follows we assume that $m\in\mn$.
Denote by $(\bar{t}_i, \bar{x}_i, \bar{y}_i)_{1\leq i\leq m}$ an
enumeration of the points of $\nu_0$ in $[0,T]\times
E_\varepsilon$ with
\begin{equation}\label{ineq}
\bar{t}_1<\bar{t}_2<\ldots<\bar{t}_m
\end{equation}
and by $\big(\bar{t}_i^{(n)}, \bar{x}_i^{(n)},
\bar{y}_i^{(n)}\big)_{1\leq i\leq m}$ the analogous enumeration of
the points $\nu_n$ in $[0,T]\times E_\varepsilon$. Note that
$\bar{t}_1>0$ in view of the assumption $\nu_0(\{0\}\times E)=0$,
whereas the assumption $t_k\neq t_j$ for $k\neq j$ ensures that
the inequalities in \eqref{ineq} are strict. Then
\begin{equation}\label{con1}
\lin \sum_{i=1}^m(| \bar{t}^{(n)}_i- \bar{t}_i|+|\bar{x}^{(n)}_i-
\bar{x}_i|+|\bar{y}^{(n)}_i-\bar{y}_i|)=0.
\end{equation}

Later on we shall need the following relation
\begin{equation}\label{con2}
f_n(\bar{t}_i^{(n)}-1/n)=f_n(\bar{t}_i^{(n)}-)\to
f_0(\bar{t}_i-),\quad n\to\infty
\end{equation}
for $i=1,\ldots, m$. To prove it, fix any $i=1,\ldots, m$ and
assume that $\bar{t}_i$ is a discontinuity point of $f_0$. Then
condition \eqref{eq1} in combination with
$f_n(\bar{t}_i^{(n)})-f_n(\bar{t}_i^{(n)}-)=\bar{x}_i^{(n)}\to
\bar{x}_i\neq 0$ as $n\to\infty$ entails
$\bar{x}_i=f_0(\bar{t}_i)-f_0(\bar{t}_i-)$ and \eqref{con2} (see
the proof of Proposition 2.1 on p.~337 in
\cite{Jacod+Shiryaev:2003}). If $\bar{t}_i$ is a continuity point
of $f_0$, \eqref{con2} holds trivially. Arguing similarly, we also
obtain
\begin{equation}\label{con3}
\lim_{n\to\infty}\,\max_{t\in
[0,t_i^{(n)}-2/n]}\,f_n(t)=\sup_{t\in [0,\bar{t}_i)}\,f_0(t)
\end{equation}
for $i=1,\ldots, m$.

We first work with functions $g_{n,\varepsilon}$ and
$g_{0,\varepsilon}$ which are counterparts of $g_n$ and $g_0$
based on the restrictions of $\nu_n$ and $\nu_0$ to $[0,T]\times
E_\varepsilon$. Put $$A_{n,\,T}:=\{j\in\mn_0: 0\leq j\leq [nT],\
(j+1)/n\neq \bar{t}_i^{(n)} \ \text{for} \ i=1,\ldots, m \}.$$ Now
we are ready to write a basic decomposition
\begin{eqnarray}
g_{n,\varepsilon}(T)&:=&\max_{0\leq i\leq
[nT]}\big(x^{(n)}_1+\ldots+x^{(n)}_i+y^{(n)}_{i+1}\1_{\{y^{(n)}_{i+1}>\varepsilon\}}\big)\notag\\&=&
\max_{i\in A_{n,T}}\,f_n(i/n)\vee \max_{1\leq k\leq
m}\left(f_n(\bar{t}_k^{(n)}-1/n)+\bar{y}^{(n)}_k
\right)\notag\\&=& \max_{0\leq i\leq [nT]}\,f_n(i/n)\vee
\max_{1\leq k\leq m}\left(f_n(\bar{t}_k^{(n)}-1/n)+\bar{y}^{(n)}_k
\right)\notag\\&=&\max_{t\in [0,T]}\,f_n(t)\vee \max_{1\leq k\leq
m}\left(f_n(\bar{t}_k^{(n)}-1/n)+\bar{y}_k^{(n)}
\right),\label{inter2}
\end{eqnarray}
the third equality following from the fact that, for integer $i\in
[0, [nT]]$ such that $i/n=\bar{t}_k^{(n)}-1/n$ for some
$k=1,\ldots, m$, we have $f_n(i/n)<\max_{1\leq k\leq
m}\left(f_n(\bar{t}_k^{(n)}-1/n)+\bar{y}^{(n)}_k \right)$ because
all the $\bar{y}^{(n)}_k$ are positive.

It is convenient to state the following known result as a lemma,
for it will be used twice in the subsequent proof.
\begin{lemma}\label{imp}
Let $s_0$ be a continuity point of $f_0$ and $(s_n)_{n\in\mn}$ a
sequence of positive numbers converging to $s_0$ as $n\to\infty$.
Then $\lin \sup_{t\in [0,s_n]} f_n(t)=\max_{t\in
[0,s_0]}\,f_0(t)$.
\end{lemma}
\begin{proof}
We first observe that $\sup_{t\in [0,s_0]}\,f_0(t)=\max_{t\in
[0,s_0]}\,f_0(t)$ because $s_0$ is a continuity point of $f_0$
(hence, of the supremum). It is well-known (and easily checked)
that \eqref{eq1} entails
\begin{equation}\label{supr}
\lin \sup_{0\leq t\leq \cdot}\,f_n(t)=\sup_{0\leq t\leq
\cdot}\,f_0(t)
\end{equation}
in the $J_1$-topology on $D$. In particular, $\lin \sup_{t\in
[0,s_n]}\,f_n(t)=\max_{t\in [0,s_0]}\,f_0(t)$.
\end{proof}

Recalling that $T$ is a continuity point of $f_0$ and using Lemma
\ref{imp} with $s_n=T$ for all $n\in\mn_0$ we infer
$$
\lin \max_{t\in [0,T]}\,f_n(t) = \sup_{t\in [0,T]}\,f_0(t)
$$
and thereupon
\begin{equation}\label{aux}
\lin g_{n,\varepsilon}(T)=\sup_{s\in[0,T]}f_0(s)\vee
\sup_{\bar{t}_k\leq T}\left(f_0(\bar{t}_{k}-)+\bar y_{k}
\right):=g_{0,\varepsilon}(T)
\end{equation}
having utilized \eqref{con1} and \eqref{con2} for the second
supremum.

Further, we claim that
\begin{equation}\label{121}
\sup_{t\geq 0}\,|g_0(t)-g_{0,\,\varepsilon}(t)|\leq \varepsilon\quad \text{and}\quad \sup_{t\geq 0}\,|g_n(t)-g_{n,\,\varepsilon}(t)|\leq \varepsilon.
\end{equation}
We only prove the first inequality, the proof of the second being analogous and simpler. Write
\begin{eqnarray*}
|g_0(t)-g_{0,\,\varepsilon}(t)|&=&g_0(t)-g_{0,\,\varepsilon}(t)=\sup_{s\in
[0,t]}\,f_0(s)\vee \sup_{t_k\leq t}\,(f_0(t_k-)+y_k)\\&-&
\sup_{s\in [0,t]}\,f_0(s)\vee \sup_{\bar{t}_k\leq
t}\,(f_0(\bar{t}_k-)+\bar{y}_k)
\end{eqnarray*}
for all $t\geq 0$. There are two possibilities: either
$$\sup_{t_k\leq t}\,(f_0(t_k-)+y_k)=\sup_{t_k\leq t,\,t_k\neq
\bar{t}_k}\,(f_0(t_k-)+y_k)\vee \sup_{\bar{t}_k\leq
t}\,(f_0(\bar{t}_k-)+\bar{y}_k)=\sup_{\bar{t}_k\leq
t}\,(f_0(\bar{t}_k-)+\bar{y}_k)$$ in which case
$|g_0(t)-g_{0,\,\varepsilon}(t)|=0$ for all $t\geq 0$, i.e., the
first inequality in \eqref{121} holds, or
$$\sup_{t_k\leq t}\,(f_0(t_k-)+y_k)=\sup_{t_k\leq t,\,t_k\neq
\bar{t}_k}\,(f_0(t_k-)+y_k).$$ Observe that $$\sup_{t_k\leq
t,\,t_k\neq \bar{t}_k}\,(f_0(t_k-)+y_k)\leq \sup_{s\in
[0,t]}\,f_0(s)+ \varepsilon$$ as a consequence of $y_k\leq
\varepsilon$ for all $k\in\mn$ such that $t_k\neq \bar{t}_k$, and
that
$$\sup_{s\in [0,t]}\,f_0(s)\vee \sup_{\bar{t}_k\leq
t}\,(f_0(\bar{t}_k-)+\bar{y}_k)\geq \sup_{s\in [0,t]}\,f_0(s).$$
Hence, the first inequality in \eqref{121} holds in this case,
too.

It remains to note that
\begin{eqnarray*}
|g_n(T)-g_0(T)|&\leq&
|g_n(T)-g_{n,\varepsilon}(T)|+|g_{n,\varepsilon}(T)-g_{0,\varepsilon}(T)|+|g_{0,\varepsilon}(T)-g_0(T)|\\&\leq&
2\varepsilon+|g_{n,\varepsilon}(T)-g_{0,\varepsilon}(T)|
\end{eqnarray*}
and then first let $n$ tend to $\infty$ and use \eqref{aux}, and
then let $\varepsilon$ go to zero through the sequence
$(\varepsilon_k)$. This shows that $\lin g_n(T)=g_0(T)$. The proof of
the $M_1$-convergence is complete.

\noindent {\sc Proof of the $J_1$-convergence}. We intend to prove
that whenever $\bar{s}$ is a discontinuity point of
$g_{0,\,\varepsilon}$ there is a sequence $(s_n)_{n\in\mn}$
converging to $\bar{s}$ for which
\begin{equation}\label{127}
\lin g_{n,\varepsilon}(s_n)=g_{0,\varepsilon}(\bar{s})\quad\text{and}\quad \lin g_{n,\varepsilon}(s_n-)=g_{0,\varepsilon}(\bar{s}-).
\end{equation}

Now we explain that \eqref{127} entails
\begin{equation}\label{des}
\lin g_n=g_0
\end{equation}
in the $J_1$-topology on $D$, which is the desired result. From
the first part of the proof we know that $\lin
g_{n,\varepsilon}=g_{0,\varepsilon}$ in the $M_1$-topology on $D$.
Thus, if \eqref{127} holds, we conclude that
\begin{equation}\label{good}
\lin g_{n,\varepsilon}=g_{0,\varepsilon}
\end{equation}
in the $J_1$-topology on $D$ by Lemma \ref{lem1}(b). Let $r\in
[0,T]$ be a continuity point of $g_0$, where $T\in K$ (see the
first part of the proof for the definition of $K$). In order to
prove \eqref{des} it suffices to show that $\lin g_n=g_0$ in the
$J_1$-topology on $D[0,r]$ or, equivalently, that $\lin
\rho(g_n,g_0)=0$ where $\rho$ is the standard Skorokhod metric on
$[0,r]$. Since $r$ is also a continuity point of
$g_{0,\varepsilon}$, relation \eqref{good} ensures that $\lin
\rho(g_{n,\varepsilon}, g_{0,\varepsilon})=0$. We proceed by
writing
\begin{eqnarray*}
\rho(g_n,g_0)&\leq&
\rho(g_n,g_{n,\varepsilon})+\rho(g_{n,\varepsilon},
g_{0,\varepsilon})+\rho(g_{0, \varepsilon},g_0)\leq \sup_{0\leq
t\leq r}\,|g_n(t)-g_{n,\varepsilon}(t)|+\rho(g_{n,\varepsilon},
g_{0,\varepsilon})\\&+&\sup_{0\leq t\leq
r}\,|g_{0,\varepsilon}(t)-g_0(t)|\leq
2\varepsilon+\rho(g_{n,\varepsilon}, g_{0,\varepsilon})
\end{eqnarray*}
having utilized the fact that $\rho$ is dominated by the uniform
metric on $[0,r]$ for the penultimate inequality and \eqref{121} for the last. Now sending $n\to\infty$
and then letting $\varepsilon$ approach zero through the sequence
$(\varepsilon_k)$ proves $\lin \rho(g_n,g_0)=0$ and thereupon
\eqref{des}.

Passing to the proof of \eqref{127} we consider two cases.

\noindent {\sc Case 1}: $\bar{s}$ is a discontinuity point of
$g_{0,\varepsilon}$ and a continuity point of $f_0$.

We claim that $\bar{s}=\bar{t}_k$ for some $k=1,\ldots, m$.
Indeed, if $g_{0,\varepsilon}(\bar{s})=\sup_{t\in
[0,\bar{s}]}\,f_0(t)$, then $\bar{s}$ is a continuity point of
$g_{0,\varepsilon}$, a contradiction. Thus, we must have
$g_{0,\varepsilon}(\bar{s})=\max_{\bar{t}_j\leq
\bar{s}}\,(f_0(\bar{t}_j-)+\bar{y}_j)$. The points
$\bar{t}_1,\ldots$ $\bar{t}_m$ are the only discontinuities of
$x\mapsto \max_{\bar{t}_j\leq x}\,(f_0(\bar{t}_j-)+\bar{y}_j)$ on
$[0,\infty)$. Therefore, $\bar{s}=\bar{t}_k$ for some $k=1,\ldots,
m$, as claimed.

With this $k$, set $s_n=t_k^{(n)}-1/n$. Analogously to
\eqref{inter2} we obtain
\begin{equation}\label{01}
g_{n,\varepsilon}\big(\bar{t}_k^{(n)}-1/n\big)=\max_{t\in
[0,\bar{t}_k^{(n)}-2/n]}\, f_n(t)\vee \max_{1\leq j\leq
k}\,\big(f_n(\bar{t}_j^{(n)}-1/n)+\bar{y}_j^{(n)}\big)
\end{equation}
and
\begin{equation}\label{02}
g_{n,\varepsilon}\big((\bar{t}_k^{(n)}-1/n)-\big)=g_{n,\varepsilon}\big(\bar{t}_k^{(n)}-2/n\big)=\max_{t\in[0,\bar{t}_k^{(n)}-2/n]}\,
f_n(t)\vee \max_{1\leq j\leq
k-1}\,\big(f_n(\bar{t}_j^{(n)}-1/n)+\bar{y}_j^{(n)}\big).
\end{equation}
We shall now show
$$\lin g_{n,\varepsilon}(\bar{t}_k^{(n)}-1/n)=\sup_{t\in [0,\bar{t}_k]}\,f_0(t)\vee \max_{\bar{t}_j\leq
\bar{t}_k}\,\big(f_0(\bar{t}_j-)+\bar{y}_j\big)=g_{0,\varepsilon}(\bar{t}_k)$$
and
$$\lin g_{n,\varepsilon}((\bar{t}_k^{(n)}-1/n)-)=\sup_{t\in [0,\bar{t}_k]}\,f_0(t)\vee \max_{\bar{t}_j<\bar{t}_k}\,\big(f_0(\bar{t}_j-)+\bar{y}_j\big)=
g_{0,\varepsilon}(\bar{t}_k-).$$ Indeed, while the limit relations
\begin{equation}\label{con5}
\lin \max_{1\leq j\leq
k}\,\big(f_n(\bar{t}_j^{(n)}-1/n)+\bar{y}_j^{(n)}\big) =
\max_{\bar{t}_j\leq
\bar{t}_k}\,\big(f_0(\bar{t}_j-)+\bar{y}_j\big)
\end{equation}
and
\begin{equation}\label{con6}
\lin
\max_{1\leq j\leq
k-1}\,\big(f_n(\bar{t}_j^{(n)}-1/n)+\bar{y}_j^{(n)}\big)
=\max_{\bar{t}_j<\bar{t}_k}\,\big(f_0(\bar{t}_j-)+\bar{y}_j\big)
\end{equation}
are secured by \eqref{con1} and \eqref{con2}, the limit relation
$$\lin \max_{t\in [0,\bar{t}_k^{(n)}-2/n]}\, f_n(t)=\sup_{t\in [0,\bar{t}_k]}\,f_0(t)$$ holds in view of Lemma
\ref{imp} with $s_n=\bar{t}_k^{(n)}-1/n$ for $n\in\mn$ and
$s_0=\bar{t}_k$. Thus, formula \eqref{127} has been proved in Case
1.

\noindent {\sc Case 2}: $\bar{s}$ is a discontinuity point of both
$g_{0,\varepsilon}$ and $f_0$.

\noindent {\sc Subcase 2.1}: $\bar{s}=\bar{t}_k$ for some
$k=1,\ldots, m$. We intend to check that \eqref{127} holds with
$s_n=t_k^{(n)}-1/n$. Using formulae \eqref{01} and \eqref{02} and
recalling \eqref{con3}, \eqref{con5} and \eqref{con6} we infer
$$\lin g_{n,\varepsilon}((\bar{t}_k^{(n)}-1/n)-)=\sup_{t\in [0,\bar{t}_k)}\,f_0(t)\vee \max_{\bar{t}_j<\bar{t}_k}\,\big(f_0(\bar{t}_j-)+\bar{y}_j\big)=
g_{0,\varepsilon}(\bar{t}_k-)$$ and
$$\lin g_{n,\varepsilon}(\bar{t}_k^{(n)}-1/n)=\sup_{t\in [0,\bar{t}_k)}\,f_0(t)\vee \max_{\bar{t}_j\leq \bar{t}_k}\,\big(f_0(\bar{t}_j-)+\bar{y}_j\big).$$ Since
$$
f_0(\bar{t}_k)= f_0( \bar{t}_k-)+\bar{x}_k\leq f_0( \bar{t}_k-)+\bar y_k.
$$
in view of \eqref{eq1dop}, we conclude that $$\sup_{t\in [0,\bar{t}_k)}\,f_0(t)\vee \max_{\bar{t}_j\leq \bar{t}_k}\,\big(f_0(\bar{t}_j-)+\bar{y}_j\big)=
\sup_{t\in [0,\bar{t}_k]}\,f_0(t)\vee \max_{\bar{t}_j\leq \bar{t}_k}\,\big(f_0(\bar{t}_j-)+\bar{y}_j\big)=g_{0,\varepsilon}(\bar{t}_k),$$ thereby finishing the proof of \eqref{127} in this subcase.

\noindent {\sc Subcase 2.2}: $\bar{s}\notin \{\bar{t}_1,\ldots,
\bar{t}_m\}$. Let $r$ be a continuity point of $f_0$ satisfying
$r>\bar{s}$. Recall that \eqref{eq1} entails \eqref{supr}. Hence,
there is a sequence $(\lambda_n)_{n\in\mn}$ of continuous strictly
increasing functions of $[0,r]$ onto $[0,r]$ such that
$$
\lin \sup_{t\in[0,r]}|\lambda_n(t)-t|=0\quad \text{and}\quad \lin
\sup_{s\in[0,r]}|\sup_{t\in [0,\lambda_n(s)]}f_0(t)-\sup_{t\in
[0,s]}\,f_n(t)|=0.$$ In particular, $\lin
\sup_{t\in[0,s_n)}f_n(t)=\sup_{t\in[0,\bar{s})}f_0(t)$ and $\lin
\sup_{t\in[0,s_n]}f_n(t)=\sup_{t\in[0,\bar{s}]}f_0(t)$, where
$s_n:=\lambda_n(\bar{s})$. We shall show that \eqref{127} holds
with this choice of $s_n$. To this end, it only remains to note
that $$\lin \max_{\bar{t}_j^{(n)}<
{s}_n}\,\big(f_n(\bar{t}_j^{(n)})+\bar{y}_j^{(n)}\big)= \lin
\max_{\bar{t}_j^{(n)}\leq
s_n}\,\big(f_n(\bar{t}_j^{(n)})+\bar{y}_j^{(n)}\big)=
\max_{\bar{t}_j\leq
\bar{s}}\,\big(f_0(\bar{t}_j-)+\bar{y}_j\big)=\max_{\bar{t}_j<\bar{s}}\,\big(f_0(\bar{t}_j-)+\bar{y}_j\big)$$
as a consequence of $\bar{s}\notin \{\bar{t}_1,\ldots,
\bar{t}_m\}$ and \eqref{con1}. Therefore,
\begin{eqnarray*}
\lin g_{n,\varepsilon}(s_n-)&=&\lin \sup_{t\in
[0,s_n)}\,f_n(t)\vee
\max_{\bar{t}_j^{(n)}<s_n}\,\big(f_n(\bar{t}_j^{(n)})+\bar{y}_j^{(n)}\big)\\&=&
\sup_{t\in [0,\bar{s})}\,f_0(t)\vee
\max_{\bar{t}_j<\bar{s}}\,\big(f_0(\bar{t}_j
)+\bar{y}_j^{(n)}\big)=g_{0,\varepsilon}(\bar{s}-)
\end{eqnarray*}
and
\begin{eqnarray*}
\lin g_{n,\varepsilon}(s_{n})&=&\lin \sup_{t\in
[0,s_n]}\,f_n(t)\vee \max_{\bar{t}_j^{(n)}\leq
s_n}\,\big(f_n(\bar{t}_j^{(n)})+\bar{y}_j^{(n)}\big)\\&=&
\sup_{t\in [0,\bar{s}]}\,f_0(t)\vee
\max_{\bar{t}_j\leq\bar{s}}\,\big(f_0(\bar{t}_j)+\bar{y}_j^{(n)}\big)=g_{0,\varepsilon}(\bar{s})
\end{eqnarray*}
which proves \eqref{127}.

The proof of Theorem \ref{main10} is complete.
\end{proof}

\begin{proof}[Proof of Theorem \ref{main}]
By Corollary 6.1 on p.~183 in \cite{Resnick:2007}, condition
\eqref{princ} entails $$\sum_{l\geq 1}\varepsilon_{(l/n,\,
\xi_l/a(n),\,
\eta^+_l/a(n))}~\Rightarrow~\sum_k\varepsilon_{(\theta_k,i_k,j_k)},\quad
n\to\infty$$ on $\mathcal{M}_p([0,\infty)\times E)$ and thereupon
\begin{equation}\label{inter}
\bigg(\sum_{l\geq 1} \1_{\{ \xi_l\neq0 \}}\varepsilon_{(l/n,\,
\xi_l/a(n))},\, \sum_{l\geq 1}\varepsilon_{(l/n,\, \xi_l/a(n),\,
\eta^+_l/a(n))}\bigg)~\Rightarrow~\bigg(\sum_k \1_{\{ i_k\neq 0
\}}\varepsilon_{(\theta_k,i_k)},\,\sum_k\varepsilon_{(\theta_k,i_k,j_k)}\bigg)
\end{equation}
as $n\to\infty$ on $\mathcal{M}_p([0,\infty)\times
([-\infty,+\infty]\backslash\{0\}))\times
\mathcal{M}_p([0,\infty)\times E)$ because the first coordinates
are just the restrictions of the second from $[0,\infty)\times E$
on $[0,\infty)\times ([-\infty,+\infty]\backslash\{0\})$.

In the proof of Corollary 7.1 on p.~218 in \cite{Resnick:2007} it
is shown that the convergence of the first coordinates in
\eqref{inter} implies $S_{[n\cdot]}/a(n)\overset{{\rm
J_1}}{\Rightarrow}\mathcal{S}_\alpha(\cdot)$ as $n\to\infty$.
Starting with {\it full} relation \eqref{inter} exactly the same
reasoning leads to the conclusion
\begin{equation*}
\bigg(\frac{S_{[n\cdot]}}{a(n)},\, \sum_{l\geq
1}\varepsilon_{(l/n,\, \xi_l/a(n),\,
\eta^+_l/a(n))}\bigg)~\Rightarrow~\bigg(\mathcal{S}_\alpha^\ast(\cdot),\,\sum_k\varepsilon_{(\theta_k,i_k,j_k)}\bigg),\quad
n\to\infty
\end{equation*}
or, equivalently,
\begin{equation*}\label{inter1}
\bigg(\frac{S_{[n\cdot]}}{a(n)},\, \sum_{l\geq
1}\varepsilon_{(l/n,\, \xi_l/a(n),\,
\eta_l/a(n))}\1_{\{\eta_l>0\}}\bigg)~\Rightarrow~\bigg(\mathcal{S}_\alpha^\ast(\cdot),\,\sum_k\varepsilon_{(\theta_k,i_k,j_k)}\bigg),\quad
n\to\infty
\end{equation*}
in the product topology on $D\times \mathcal{M}_p([0,\infty)\times
E)$. By the Skorokhod representation theorem there are versions
which converge a.s. Retaining the original notation for these
versions we want to apply Theorem \ref{main10} with
$f_n(\cdot)=S_{[n\cdot]}/a(n)$, $f_0=\mathcal{S}_\alpha^\ast$,
$\nu_n=\sum_{l\geq 1}\varepsilon_{\{l/n,\,\xi_l/a(n),\,
\eta_l/a(n)\}}\1_{\{\eta_l>0\}}$ and $\nu_0=
N^{(\nu)}=\sum_k\varepsilon_{(\theta_k,i_k,j_k)}$. We already know
that conditions \eqref{eq1} and \eqref{eq1_1} are fulfilled a.s.
It is obvious that $N^{(\nu)}(\{0\}\times E)=0$ a.s. In order to
show that $N^{(\nu)}$ does not have clustered jumps a.s.\, i.e.,
$\theta_k\neq \theta_j$ for $k\neq j$ a.s., it suffices to check
this property for $N^{(\nu)}(([0,T]\times [-\infty,+\infty]\times
(\delta,\infty])\cap \cdot)$ with $T>0$ and $\varepsilon>0$ fixed.
This is done on p.~223 in \cite{Resnick:2007}. Hence Theorem
\ref{main10} is indeed applicable with our choice of $f_n$ and
$\nu_n$, and \eqref{limit_perturbed01} follows.
\end{proof}

\section{Proofs of Propositions \ref{main1} and \ref{main2} and Theorem \ref{main3}}

\begin{proof}[Proof of Proposition \ref{main1}]
Fix any $T>0$. Note that \eqref{cond1} entails
\begin{equation}\label{cond11}
\lix x\mmp\{\eta>\varepsilon a(x)\}=0
\end{equation}
for all $\varepsilon>0$ because $a(x)$ is regularly varying at
$\infty$ (of index $1/\alpha$). Since, for all $\varepsilon>0$,
\begin{eqnarray*}
\mmp\{\sup_{0\leq s\leq T}\,\eta_{[ns]+1}>\varepsilon
a(n)\}&=&1-\big(\mmp\{\eta\leq\varepsilon
a(n)\}\big)^{[nT]+1}\\&\leq& ([nT]+1)\mmp\{\eta>\varepsilon
a(n)\}\to 0
\end{eqnarray*}
as $n\to\infty$ in view of \eqref{cond11}, we infer
$$\frac{\sup_{0\leq s\leq T}\,\eta_{[ns]+1}}{a(n)}~\tp~ 0, \quad
n\to\infty.
$$ This in combination with \eqref{skor} enables us to conclude that
$$\frac{S_{[n\cdot]}+\eta_{[n\cdot]+1}}{a(n)}~\overset{{\rm J_1}}{\Rightarrow}~\mathcal{S}_\alpha(\cdot),\quad n\to\infty$$ by Slutsky's lemma.
Relation \eqref{limit_perturbed0} now follows by the continuous
mapping theorem because the supremum functional is continuous in
the $J_1$-topology.
\end{proof}

\begin{proof}[Proof of Proposition \ref{main2}]
To begin with, we note that $\lin (b(n)/a(n))=\infty$ as a
consequence of \eqref{cond12}. Consequently,
$$S_{[n\cdot]}/b(n)~\overset{{\rm J_1}}{\Rightarrow}~ \Xi(\cdot)$$ in view of \eqref{skor}, where $\Xi(t):=0$ for $t\geq 0$. Further, according to Theorem 3.6
on p.~62 in combination with Corollary 6.1 on p.~183 in
\cite{Resnick:2007}, regular variation of $\mmp\{\eta>x\}$ ensures
that
$$\sum_{k\geq 0}\varepsilon_{(k/n,
\eta_{k+1}/b(n))}\1_{\{\eta_{k+1}>0\}}~\Rightarrow~
N:=\sum_{k}\varepsilon_{(\theta_k, j_k)},\quad n\to\infty$$ on
$\mathcal{M}_p([0,\infty)\times (0,\infty])$ and thereupon
$$\bigg(S_{[n\cdot]}/b(n), \sum_{k\geq
0}\1_{\{\eta_{k+1}>0\}}\varepsilon_{(k/n,
\eta_{k+1}/b(n))}\bigg)~\Rightarrow~ \big(\Xi(\cdot), N\big),\quad
n\to\infty$$ on $D\times \mathcal{M}_p([0,\infty)\times
(0,\infty])$ equipped with the product topology. Arguing as in the
proof of Theorem \ref{main} we obtain \eqref{limit_perturbed1} by
an application of Remark \ref{re2} with
$f_n(\cdot)=S_{[n\cdot]}/b(n)$, $f_0=\Xi$, $\nu_n=\sum_{k\geq
0}\varepsilon_{(k/n, \eta_{k+1}/b(n))}\1_{\{\eta_{k+1}>0\}}$ and
$\nu_0=N$. The condition $N((a,b)\times (0,\infty])\geq 1$ a.s.\
whenever $0<a<b$ required in Remark \ref{re2} holds because
$\mu((0,\infty])=\infty$.
\end{proof}

\begin{proof}[Proof of Theorem \ref{main3}]
The limit relations of Theorem \ref{main} and Propositions
\ref{main1} and \ref{main2} can be written in a unified form as
$$\frac{\max_{0\leq k\leq [n\cdot]}\,T_{k+1}}{c(n)}~\Rightarrow~ X(\cdot),\quad
n\to\infty$$ in the $J_1$- or the $M_1$-topology on $D$. Using
this limit relation together with the inequality
\begin{eqnarray*} \max_{0\leq k\leq
n}\,T_{k+1}\leq \log\bigg(\sum_{k=0}^n e^{T_{k+1}}\bigg)\leq \log
(n+1)+ \max_{0\leq k\leq n}\,T_{k+1}
\end{eqnarray*}
and the fact that $\lin (\log n/c(n))=0$ we arrive at the desired
conclusion
$$\frac{\log\sum_{k=0}^{[n\cdot]}e^{T_{k+1}}}{c(n)}~\Rightarrow~ X(\cdot),\quad
n\to\infty$$ in the $J_1$- or $M_1$-topology on $D$.

Since the limit process $X$ is nonnegative a.s., the result
remains true on replacing $\log$ with $\log ^+$.
\end{proof}


\end{document}